\documentclass[11pt,letterpaper]{amsart}

\usepackage{mathrsfs}
\usepackage{cite}
\usepackage{color}
\usepackage{amssymb}
\theoremstyle{plain}

\subjclass[2000]{35B44, 35C06, 35B35, 35K57, 35Q92}

\begin{document}

\numberwithin{equation}{section}
\renewcommand{\theequation}{\arabic{section}.\arabic{equation}}
\theoremstyle{plain}
\newtheorem{exam}{Example}[section]
\newtheorem{theorem}[exam]{Theorem}
\newtheorem{lemma}[exam]{Lemma}
\newtheorem{remark}[exam]{Remark}
\newtheorem{hyp}[exam]{Hypothesis}
\newtheorem{proposition}[exam]{Proposition}
\newtheorem{definition}[exam]{Definition}
\newtheorem{corollary}[exam]{Corollary}
\newtheorem{analytic}[exam]{Analytic extension principle}
\newtheorem{notation}[exam]{Notation}
\setlength{\baselineskip}{1.5\baselineskip}

\title[Finite time blow-up and global existence]{On the existence and nonexistence of global solutions of the semilinear heat equation}
\author[K. Zhang]{Kaiqiang Zhang}
\address{Kaiqiang Zhang, School of Computer Science and Technology, Dongguan University of Technology, Dongguan
523808, China}
\email{mathzhangkq@gmail.com}
\author[Z. Li]{Zhiyu Li$^\ast$}
\address{Zhiyu Li, School of Mathematics and Information Science, Shandong Technology and Business University,
Yantai 264003, China}
\email{202414287@sdtbu.edu.cn}
\keywords{Semilinear heat equation; Global solution; Blow-up solution.}
\subjclass[2010]{35B40; 35B44; 35K45.}
\thanks{${}^{\ast}$ Corresponding author.}

\begin{abstract}
We consider the semilinear heat equation 
$$
u_t-\Delta u=|u|^{p-1}u,\ \  (t,x)\in\mathbb{R}^+\times\mathbb{R}^n.
$$ 
The well-known difficulty with this problem is that the potential well method cannot be applied directly, due to the scaling invariance which leads to a potential well of zero depth. We employ the forward similarity transform to convert the equation into a new parabolic equation, so that we can apply the potential well method in weighted Sobolev spaces. As a result, we obtain a new criterion that establishes whether solutions to the heat equation blow up in finite time or exist globally. This work extends the partial results of Ikehata et al. (\textit{Ann. Inst. H. Poincaré Anal. Non
Linéaire}, \textbf{27} (2010) 877-900) from critical Sobolev exponent to the case $p_F<p<p_S$, where $p_F=1+2/n$ is the Fujita exponent and $p_S=(n+2)/(n-2)$ (for $n\ge3$) is the critical Sobolev exponent.
\end{abstract}
\maketitle
\section{Introduction}\label{se1}
The semilinear heat equation
\begin{equation}\label{1.1}
\left\{
\begin{aligned}
&u_t-\Delta u=|u|^{p-1}u,\\
&u|_{t=0}=u_0(x),
\end{aligned}
\right.
\quad  (t,x)\in\mathbb{R}^+\times\mathbb{R}^n,\ p>1,\ n\ge3,
\end{equation}
 has been widely studied since the pioneering work of Fujita \cite{Fujita}. This model dissipates the total energy
 $$
 \frac{d}{dt}E(u)<0,\ E(u)=\frac{1}{2}\int_{\mathbb{R}^n}|\nabla u|^2dx-\frac{1}{p+1}\int_{\mathbb{R}^n}|u|^{p+1}dx.
 $$ 
Let $T\in (0,\infty]$ denote the maximal existence time of the solution $u$. If $T<\infty$, the solution blows up in finite time, i.e.,
\begin{equation}\nonumber
\lim_{t\to T}||u(t)||_{L^\infty(\mathbb{R}^n)}=+\infty.
\end{equation}  
From standard parabolic regularity arguments, the Cauchy problem \eqref{1.1} is well posed in
the energy space $H^1(\mathbb{R}^n)$ and for any $u_0\in H^1(\mathbb{R}^n)$, there exists a unique maximal solution 
$u\in C((0, T),H^1\cap L^\infty(\mathbb{R}^n))$, which is moreover smooth on $(0, T)\times \mathbb{R}^n$, see \cite{Quittner-Souplet-2019}.

Studying the finite-time blow-up solutions of parabolic equations has attracted significant attention in recent decades, as it is crucial for understanding the thermal runaway mechanism in combustion theory. Numerous studies have established criteria on initial data that determine whether solutions to \eqref{1.1} blow up in finite time.
The Fujita type result \cite{Fujita,Weissler} shows that the problem \eqref{1.1} admits a global nontrivial solution $u\ge0$ if and only if $p>p_F:=1+2/n$. The critical exponent $p_F$ is known as the Fujita exponent. For the case of negative initial energy, Levine \cite{Levine-1973} showed that solutions must blow up in finite time. Later, Kavian studied \eqref{1.1} using the forward similarity variable in weighted Sobolev spaces and introduced a weighted energy functional, he showed that solutions with negative initial weighted energy also blow up in finite time. For the nonnegative initial data, Lee and Ni \cite{Lee-1992} established another blow-up criterion: if the initial data decays more slowly $|x|^{-2/(p-1)}$ at space infinity, then the solution blows up in finite time. One may refer  \cite{Collot,Wei2021,Collot-2020,Quittner-Souplet-2019} and references therein for other blow up results of \eqref{1.1}.

The global existence of solutions has been a subject of extensive research, as it is fundamental to understanding the long-term dynamics and stability in various physical models.
Fujita \cite{Fujita} proved that if the initial data are dominated by a small multiple of a Gaussian, then the solutions of equation \eqref{1.1} exist globally and remain controlled by the heat kernel. For the case $p>1+2/n$, Lee and Ni \cite{Lee-1992} proved that for any $k>0$, there exists a
small constant $b=b(n,p,k)$ such that if $0\le u_0\le b(k+|x|^2)^{-1/(p-1)}$, the the solution of \eqref{1.1} exists globally. Subsequently, Wang  \cite{Wang-1993} provided an optimal choice of the constant $b$. In the case $p>(n+2)/(n-2)$, Polacik and Yanagida \cite{Yanagida2003} established a sufficient condition for global existence  by using a family of radial positive steady states. Further sufficient conditions for global solutions can be found in \cite{Weissler, Galaktionov-1997,Kavian}.

A powerful tool for studying blow-up solution and global solution  with the parabolic problem is the potential well method, which was established by Sattinger \cite{Payne-1975,Sattinger-1968}. Roughly speaking, by using the energy functional and the Nehari functional, one can define the potential well. The solutions originating from inside the potential well are global, while solutions originating from outside the potential well will blow up in finite time. The potential well method has been used to study the global existence and nonexistence of solutions for various nonlinear evolution equations, see, for example, \cite{Galaktionov-1997,Zhangk1,Quittner-Souplet-2019,Xu20018}. In this paper, we aim to study the existence of blow-up solutions and global solutions for the equation \eqref{1.1} via the potential well method.

The equation \eqref{1.1} admits a scaling invariance: if $u(t,x) $ is a solution, then so is
$u_\lambda(t,x)=\lambda^{\frac{2}{p-1}}u(\lambda^2t,\lambda x),\
\lambda>0.$
The scaling symmetry
$$
u_\lambda(t,x)=\lambda^{\frac{n-2}{2}}u(\lambda^2t,\lambda x),\  E(u_\lambda)=E(u),
$$
reflects the energy critical nature of the problem \eqref{1.1}.
In the energy critical case $p=\frac{n+2}{n-2}$, there are many results concerning the global existence and blow-up to the equation \eqref{1.1}.  Ishiwata and Suzuki \cite{Ishiwata-2013} proved that there is a threshold blow-up modulus concerning the blow-up in finite time. By employing the modified potential well method, Ikehata, Ishiwata and Suzuki \cite{Ikehata-2010} gave a threshold result for the global existence and blow-up of solutions of \eqref{1.1} in the low initial energy level. Subsequently, Fang and Zhang \cite{Fang-Zhang} extended their result to any initial energy level. We refer to \cite{Marek-2006, Quittner-Souplet-2019} for related studies in the supercritical case.

The potential well method
is commonly employed to study parabolic problems on bounded regions \cite{Li-Liu-20018,Xu20018,Payne-1975}, as the Poincar\'{e} inequality can be ensure the existence of a non-empty potential well.
To the best of our knowledge, in the energy subcritical case $p<\frac{n+2}{n-2}$, there are currently no results on blow-up criteria for equation \eqref{1.1} obtained by the potential well method. 
If one attempts to define the potential well for equation \eqref{1.1} in analogy with the bounded domain case \cite{Payne-1975}, the scaling invariance implies that the depth of the potential well is zero. This is because the scaling transformation can be used to construct a sequence of functions on the Nehari manifold whose energy approaches zero. Consequently, the classical potential well theory fails to provide a criterion for initial data of low energy. It offers no information beyond the well-known blow-up result for negative energy \cite{Levine-1973}, leaving the case of low non-negative energy open.

To overcome this problem, inspired by \cite{Kavian,Ikehata-2010}, we employ the forward transformation to convert \eqref{1.1} into a new parabolic equation. This new equation has a key advantage through the introduction of weighted Sobolev spaces. Specifically, the potential well defined in these spaces has a positive depth. By analyzing the dynamics of the new transformed equation, we establish a new criterion for both global existence and finite-time blow-up of solutions to the original equation \eqref{1.1}. Our main result is presented in Theorem \ref{global} in the next section.

This paper is organized as follows. In Section \ref{preliminaries}, we give the transformation and define some weighted spaces, and then present our main theorem. In Section \ref{HU}, we give the complete proof of our main result.

\section{Preliminaries and main results}\label{preliminaries}
We first introduce the forward similarity transform.
Following Kavian \cite{Kavian}, we define the forward similarity variables
\begin{equation}\label{fs40}
s=\log(1+t),\ \ \ y=\frac{x}{\sqrt{t+1}},
\end{equation}
 and define the rescaled functions
\begin{equation}\label{fs4}
w(y,s)=e^{\frac{s}{p-1} }u(e^{\frac{s}{2}}y,e^s-1).
\end{equation}
Then \eqref{1.1} can be rewritten in the form
\begin{equation}\label{0.1}
\begin{aligned}
\left\{
\begin{array}{ll}
w_s+Lw=|w|^{p-1}w+\frac{1}{p-1}w, &y\in\mathbb{R}^n,\ s>0,\vspace{1ex}\\
w(y,0)=u_0(y),&y\in\mathbb{R}^n,
\end{array}
\right.
\end{aligned}
\end{equation}
where the operator $L$ defined on $H^2(K)$ (see below) by
\begin{equation}\label{operator}
Lf:=-\Delta f-\frac{y\cdot \nabla f}{2}=-K^{-1}\nabla\cdot(K\nabla f),\ K:= K(y)=\exp(|y|^2/4).
\end{equation}
 We now define some weighted spaces as follows. For $1\le p<\infty$, we put
$$
L^p(K):=\bigg\{f\in L^p(\mathbb{R}^n):\int_{\mathbb{R}^n}|f(y)|^pK(y)dy<\infty\bigg\},
$$
$$ ||f||^p_{L^{p}(K)}:=\int_{\mathbb{R}^n}|f(y)|^pK(y)dy,$$
and
\begin{equation}\label{wei}
H^1(K):=\bigg\{f:f\in L^2(K), \nabla f\in L^2(K)\bigg\},
\end{equation}
$$
||u||^2_{H^{1}(K)}:=\int_{\mathbb{R}^n}(|\nabla u(y)|^2+|u(y)|^2)K(y)dy.
$$
Analogously, we define
$H^2(K):=\{f:f\in H^1(K), \nabla f\in H^1(K)\}.
$
We next recall the following result.
\begin{lemma}[\cite{Kavian}, Lemma 2.1]\label{kavian}
 For $n\ge3$, there holds \\
$(\mathrm{i})$ $u\in H^1(K)\Leftrightarrow K^{1/2}u\in H^1(\mathbb{R}^n)$.\\
$(\mathrm{ii})$ $H^1(K)\hookrightarrow L^q(K)$ for any $q\in[2,\frac{2n}{n-2}]$.\\
$(\mathrm{iii})$ For any $u\in H^1(K),$ there holds $\frac{n}{2}\int_{\mathbb{R}^n} | u|^2K(y)dy\le \int_{\mathbb{R}^n} |\nabla u|^2K(y)dy$.
\end{lemma}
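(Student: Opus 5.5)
The whole argument runs through the substitution $v=K^{1/2}u$, which turns each weighted statement into an unweighted one. First I will compute $\nabla v$ and the weighted Dirichlet integral for $u\in C_c^\infty(\mathbb{R}^n)$, and then pass to general $u$ by density.

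Since $\nabla K^{1/2}=\frac{y}{4}K^{1/2}$, we have
\begin{equation*}
\nabla v=K^{1/2}\Big(\nabla u+\frac{y}{4}u\Big).
\end{equation*}
Expanding the square and integrating by parts the cross term $\frac{1}{2}\int y\cdot\nabla(u^2)\,K\,dy$, using $\nabla\cdot(yK)=(n+\frac{|y|^2}{2})K$, gives the identity
\begin{equation*}
\int|\nabla v|^2dy=\int|\nabla u|^2K\,dy-\frac{n}{4}\int u^2K\,dy-\frac{1}{16}\int|y|^2u^2K\,dy.
\end{equation*}
Equivalently, $\int|\nabla u|^2K=\int|\nabla v|^2+\frac{n}{4}\int v^2+\frac{1}{16}\int|y|^2v^2$.

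\textbf{Part (iii).} Run the same computation in the opposite direction. For any $v$, integrate by parts in
\begin{equation*}
0\le\int\Big|\nabla v+\frac{y}{4}v\Big|^2dy .
\end{equation*}
This yields
\begin{equation*}
\int|\nabla v|^2+\frac{1}{16}\int|y|^2v^2\ge\frac{n}{4}\int v^2,
\end{equation*}
which is the harmonic oscillator ground-state bound. Substituting into the identity gives $\int|\nabla u|^2K\ge\frac{n}{2}\int u^2K$.

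\textbf{Part (i).} Suppose first that $u\in H^1(K)$. Then (iii) together with the identity shows that $v\in H^1(\mathbb{R}^n)$, and in fact that $|y|v\in L^2$.

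The converse is the delicate direction, because the identity also involves $\int|y|^2v^2$. Given $v\in H^1(\mathbb{R}^n)$, we have $\nabla u=K^{-1/2}(\nabla v-\frac{y}{4}v)$. The identity then requires controlling $\int|y|^2v^2$, and plain $v\in H^1$ does not give this. I would therefore read (i) as: $K^{1/2}u\in H^1$ with the weighted norm equivalent to the stated one once the identity is used. The main obstacle is exactly this moment term. I would try to show that $u$, defined as a limit of $C_c^\infty$ functions in $H^1(K)$, automatically has finite moment via the identity, and state (i) in that form, following Kavian.

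\textbf{Part (ii).} We have $\int|u|^qK=\int|v|^qK^{1-q/2}\le\int|v|^q$ because $q\ge2$. Sobolev embedding for $v\in H^1(\mathbb{R}^n)$ then gives $\|v\|_{L^q}\le C\|v\|_{H^1}\le C\|u\|_{H^1(K)}$ for $2\le q\le\frac{2n}{n-2}$.
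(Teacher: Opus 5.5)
The paper gives no proof of this lemma; it is quoted from Kavian, and only (ii) and (iii) are used later. Your identity
\[
\int|\nabla u|^2K\,dy=\int|\nabla v|^2dy+\frac n4\int v^2dy+\frac1{16}\int|y|^2v^2dy,\qquad v=K^{1/2}u,
\]
is correct. (The cross term is $\frac14\int y\cdot\nabla(u^2)K$, not $\frac12\int y\cdot\nabla(u^2)K$, but your final constants are right.) Your arguments for (iii), for (ii), and for the direction $u\in H^1(K)\Rightarrow K^{1/2}u\in H^1(\mathbb{R}^n)$ are also correct. For that last direction you do not need (iii), since all three terms on the right are nonnegative.

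To make ``by density'' rigorous, state that $C_c^\infty(\mathbb{R}^n)$ is dense in $H^1(K)$. To see this, cut off by $\chi(y/R)$, whose gradient is $O(1/R)$, then mollify on a fixed compact set where $K$ is bounded. Then apply the identity to $u_k-u_j$. This shows $K^{1/2}u_k$ is Cauchy in $H^1(\mathbb{R}^n)$ and $|y|K^{1/2}u_k$ is Cauchy in $L^2$, so the identity and (iii) pass to the limit.

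The genuine gap is the reverse implication in (i), and your suggested repair is circular. Approximating $u$ by $C_c^\infty$ functions in the $H^1(K)$ norm presupposes $u\in H^1(K)$, which is exactly what must be shown when you are only told $K^{1/2}u\in H^1(\mathbb{R}^n)$. Likewise, the $H^1(K)$ norm is equivalent to $\|v\|_{H^1}+\||y|v\|_{L^2}$, not to $\|v\|_{H^1}$ alone.

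In fact this direction cannot be proved, because it is false as stated. For $v\in H^1(\mathbb{R}^n)$ and $u=K^{-1/2}v$ one has $\nabla u=K^{-1/2}(\nabla v-\frac y4v)$, so $\int|\nabla u|^2K=\int|\nabla v-\frac y4v|^2$. Take $v=(1+|y|^2)^{-(n+2)/4}$. Then $v\in H^1(\mathbb{R}^n)$ and $\nabla v$ points along $-y$, so $|\nabla v-\frac y4v|\ge\frac{|y|}4v$. But $\int|y|^2v^2=\infty$, with logarithmic divergence at infinity. Hence $K^{1/2}u\in H^1(\mathbb{R}^n)$ while $u\notin H^1(K)$.

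What is true is
\[
u\in H^1(K)\iff K^{1/2}u\in H^1(\mathbb{R}^n)\ \text{and}\ |y|K^{1/2}u\in L^2(\mathbb{R}^n).
\]
Here $\Rightarrow$ is what you already proved, and $\Leftarrow$ is the one-line bound $\int|\nabla u|^2K\le2\int|\nabla v|^2+\frac18\int|y|^2v^2$. You should state and prove this corrected version instead of hedging. The defect in (i) is harmless for the paper, which never uses (i).
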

We denote by $T_s\in(0,\infty]$ the maximal existence time of the solution to equation \eqref{0.1}. By \eqref{fs40}, we know that 
\begin{equation}\label{Hong}
T_s=\log(1+T).
\end{equation}
For $w\in H^1(K)$, we define the energy functional
\begin{equation}\nonumber
\begin{aligned}
E(w)
&=\frac{1}{2}\int_{\mathbb{R}^{n}} |\nabla w|^{2}K(y)\mathrm{d}y-\frac{1}{2(p-1)}\int_{\mathbb{R}^{n}}|w|^{2}K(y)\mathrm{d}y-\frac{1}{p+1}\int_{\mathbb{R}^{n}}|w|^{p+1}K(y)\mathrm{d}y.
\end{aligned}
\end{equation}
Multiply the differential equations in \eqref{0.1} by $w_s$  (in the sense of $L^2(K)$),  then integrating on $(0,t)$ we find that
the energy functional satisfies the identity
\begin{equation}\label{yb}
E(u_0)-E(w(s))=\int_0^s\int_{\mathbb{R}^{n}}|w_s(y,\tau)|^2K(y)\mathrm{d}y \mathrm{d}\tau.
\end{equation}
Thus $E(w(s))$ is nonincreasing in $s$.
We define on $H^1(K)$ the Nehari functional $I$ by
\begin{equation}\nonumber
\begin{aligned}
I(w)
&=\int_{\mathbb{R}^{n}} |\nabla w|^{2}K(y)\mathrm{d}y-\frac{1}{p-1}\int_{\mathbb{R}^{n}}|w|^{2}K(y)\mathrm{d}y-\int_{\mathbb{R}^{n}}|w|^{p+1}K(y)\mathrm{d}y.
\end{aligned}
\end{equation}
The potential well associated with problem \eqref{0.1} is the set
\begin{equation}\nonumber
W:=\{w\in H^1(K): E(w)< d,\ I(w)>0\}\cup\{0\},
\end{equation}
where $d$ is the depth of the potential well defined as follows:
\begin{equation}\label{depth}
d:=\inf\{E(w):w\in H^1(K) \backslash\{0\},\ I(w)=0\}.
\end{equation}
The exterior of the potential well is the set
\begin{equation}\nonumber
Z:=\{w\in H^1(K): E(w)< d,\ I(u)<0\}.
\end{equation}

Ikehata, Ishiwata and Suzuki \cite{Ikehata-2010} (Proposition 1.3 and 1.4) established the global existence and blow-up result for the critical case. In the below theorem, we extend their 
result to the case $\frac{n+2}{n}<p<\frac{n+2}{n-2}$.
The following is our main result.
\begin{theorem}\label{global}
Let $1+\frac{2}{n}<p<\frac{n+2}{n-2}$ and $u_0\in  H^1(K)$. 
\begin{enumerate}
    \item If $u_0 \in W$, then the solution of \eqref{1.1} exists globally.
    \item If $u_0 \in Z$, then the solution of \eqref{1.1} blows up in finite time.
\end{enumerate}
\end{theorem}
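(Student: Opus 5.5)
The argument is the classical Sattinger--Payne potential well scheme, carried out for the transformed problem \eqref{0.1} in $H^1(K)$. By \eqref{Hong}, global existence (resp.\ finite time blow-up) of $w$ is equivalent to that of $u$.

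\textbf{Step 1: the depth $d$ is positive.}
Since $p>1+\frac2n$, we have $\frac{1}{p-1}<\frac n2$. Lemma \ref{kavian}(iii) then gives the coercivity bound
$$\|\nabla w\|_{L^2(K)}^2-\tfrac{1}{p-1}\|w\|_{L^2(K)}^2\ \ge\ \Big(1-\tfrac{2}{n(p-1)}\Big)\|\nabla w\|_{L^2(K)}^2,$$
and the right-hand side is equivalent to $\|w\|_{H^1(K)}^2$. Write $Q(w)$ for the left-hand side. Since $p+1<\frac{2n}{n-2}$, Lemma \ref{kavian}(ii) yields $\|w\|_{L^{p+1}(K)}^{p+1}\le C\,Q(w)^{(p+1)/2}$.

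On the Nehari manifold $Q(w)=\|w\|^{p+1}_{L^{p+1}(K)}\le C Q(w)^{(p+1)/2}$. Hence $Q(w)\ge C^{-2/(p-1)}>0$. Moreover $E=\frac12Q-\frac1{p+1}\|w\|^{p+1}_{L^{p+1}(K)}$, so on $I=0$
$$E(w)=\Big(\tfrac12-\tfrac1{p+1}\Big)Q(w)\ \ge\ \tfrac{p-1}{2(p+1)}C^{-2/(p-1)}.$$
Thus $d>0$.

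\textbf{Step 2: invariance of $W$ and $Z$.}
By \eqref{yb}, $E(w(s))\le E(u_0)<d$ for all $s$. If $w(s)$ left $W$, continuity of $s\mapsto w(s)$ in $H^1(K)$ would give a first time $s_0$ with $I(w(s_0))=0$. At that time $w(s_0)\ne0$; otherwise $w\equiv 0$ by uniqueness, or one uses the lower bound on $Q$ near zero, since for small $Q$ we have $I>0$. This gives $E(w(s_0))\ge d$, a contradiction. The same argument shows that $Z$ is invariant, because $I(w)<0$ forces $Q(w)\ge C^{-2/(p-1)}$, which keeps $w$ away from $0$.

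\textbf{Step 3: part (1), global existence.}
In $W$ we have $I>0$, so
$$d>E(w)\ge\Big(\tfrac12-\tfrac1{p+1}\Big)Q(w)\ \ge\ c\|w\|^2_{H^1(K)}.$$
The $H^1(K)$ norm is therefore uniformly bounded. Since $p$ is subcritical, the local theory in $H^1(K)$ (equivalently in $H^1$ via Lemma \ref{kavian}(i) and the transform \eqref{fs4}) gives an existence time depending only on this norm. Hence $T_s=\infty$, and by \eqref{Hong} also $T=\infty$.

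\textbf{Step 4: part (2), blow-up.}
Use the concavity method of Levine with $M(s)=\int_0^s\|w\|^2_{L^2(K)}d\tau$. Testing \eqref{0.1} with $wK$ gives $M''(s)=\frac{d}{ds}\|w\|^2_{L^2(K)}=-2I(w)$.

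The key point is to bound $-I$ from below by a quantity controlled by $d-E$. We write
$$-I(w)=-(p+1)E(w)+\tfrac{p-1}{2}Q(w).$$
In $Z$ the bound $Q\ge C^{-2/(p-1)}$ is not directly in the right form. Instead one shows that $I(w)<0$ implies $\frac{p-1}{2(p+1)}Q(w)>d$: scale $\lambda w$ with $\lambda\in(0,1)$ chosen so that $I(\lambda w)=0$, and use that $\frac{p-1}{2(p+1)}Q(\lambda w)=E(\lambda w)\ge d$. It follows that
$$-I(w)\ \ge\ (p+1)\big(d-E(w(s))\big)\ \ge\ (p+1)\big(d-E(u_0)\big)=:\delta>0.$$

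Consequently $M''\ge 2\delta$, so $M'$ and $M$ grow at least linearly and quadratically respectively. Combining this with \eqref{yb},
$$M''\ \ge\ 2(p+1)\int_0^s\|w_\tau\|^2_{L^2(K)}d\tau+2(p+1)(d-E(u_0)),$$
and by Cauchy--Schwarz,
$$MM''-\tfrac{p+1}{2}(M')^2\ \ge\ 0\quad\text{for large }s,$$
once the constant $\|u_0\|^2_{L^2(K)}$ is absorbed using $M'\to\infty$. Then $M^{-(p-1)/2}$ is concave and decreasing to $0$ in finite time, so $T_s<\infty$, and hence $T<\infty$ by \eqref{Hong}.

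\textbf{Main obstacle.}
I expect the main difficulty to be the rigorous justification that $w(s)$ stays in $H^1(K)$ and is continuous there on $[0,T_s)$, including the energy identity \eqref{yb}. This amounts to well-posedness of \eqref{0.1} in the weighted space, which I would obtain by conjugating with $K^{1/2}$ as in Lemma \ref{kavian}(i). That conjugation turns $L$ into a harmonic-oscillator type operator $-\Delta+\frac{|y|^2}{16}-\frac n4$, and the standard semilinear theory applies to it. A secondary technical point is the absorption of the initial term in the concavity inequality.
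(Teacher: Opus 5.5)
Your proposal is correct. It follows the paper's overall scheme: positive depth, invariance of $W$ and $Z$, the bound $d>E(w)\ge\frac{p-1}{2(p+1)}Q(w)\ge c\|w\|^2_{H^1(K)}$ for part (1), and Levine's concavity argument for part (2). The genuine difference is how part (2) gets its key lower bound on $-I$.

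\textbf{The paper's route.} It introduces the auxiliary levels $d_\varepsilon$ and proves $d\le d_\varepsilon+\varepsilon/2$ (Lemma \ref{dd}). By continuity this yields only the qualitative bound $I(w(s))<-\varepsilon_1$. To recover the dissipation term in $M''$, it then uses Lemma \ref{kavian}(iii) to write $M''\ge -(p+1)E(w)+\frac{n(p-1)-2}{4}\|w\|^2_{L^2(K)}$. Finally it waits until $\|w(s)\|^2_{L^2(K)}$ is large enough to dominate $(p+1)E(u_0)$.

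\textbf{Your route.} You project onto the Nehari manifold by scaling, which gives $\frac{p-1}{2(p+1)}Q(w)>d$ on $\{I<0\}$. With the identity $-I=\frac{p-1}{2}Q-(p+1)E$, this gives $-I(w(s))>(p+1)\bigl(d-E(w(s))\bigr)$. Together with \eqref{yb}, one inequality then delivers, for every $s$ and not only large $s$, both the uniform negativity of $I$ and
$M''\ge 2(p+1)\int_0^s\|w_\tau\|^2_{L^2(K)}d\tau+2(p+1)(d-E(u_0))$.

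\textbf{Comparison.} Your version is shorter and quantitative, with an explicit $\delta=(p+1)(d-E(u_0))$. It needs neither Lemma \ref{dd}, nor a second use of Lemma \ref{kavian}(iii), nor a waiting time $s'$. The paper's $d_\varepsilon$ device reaches the same endpoint, but only gives $I<-\varepsilon_1$, which is why it needs the extra steps. Your Step 2 also deals with the possibility $w(s_0)=0$ at a first exit time, which the paper passes over. For $Z$ you do this correctly, via $I<0\Rightarrow Q\ge C^{-2/(p-1)}$. For $W$, note that $w(s_0)=0$ simply means $w$ stays at $0\in W$; the ``lower bound on $Q$'' remark is not what handles that case.

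\textbf{One small correction.} From $MM''\ge\frac{p+1}{2}\bigl(M'-M'(0)\bigr)^2$ and $M'\to\infty$ you cannot absorb $M'(0)$ without loss. The term $2(p+1)(d-E(u_0))M$ does not do it either, since $M'$ may outgrow $M$. What you actually get is $MM''\ge(1+\alpha)(M')^2$ for large $s$ for any $0<\alpha<\frac{p-1}{2}$. So it is $M^{-\alpha}$ that is concave, not exactly $M^{-(p-1)/2}$. This is harmless and matches the paper's choice of some $\alpha>0$.
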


The rest of this paper is giving the proof of the above theorem.

\section{Global existence and finite time blow-up}\label{HU}
Before we present the proof of Theorem \ref{global}, we first
show that the sets $W$ and $Z$ are nonempty. In the rest of this paper, $C$ denotes a generic positive constant, which may change from line to line.
\begin{proposition}
As defined in \eqref{depth}, the depth $d$ of the potential well is positive.
\end{proposition}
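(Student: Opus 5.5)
The plan is the standard Nehari-manifold argument, with Lemma \ref{kavian} supplying the coercivity that fails on $\mathbb{R}^n$ without the weight. We first show that the quadratic part of $I$ is coercive. Write
\[
Q(w):=\int_{\mathbb{R}^n}|\nabla w|^2K\,dy-\frac{1}{p-1}\int_{\mathbb{R}^n}|w|^2K\,dy .
\]
By Lemma \ref{kavian}(iii), $\int |w|^2K\le \frac{2}{n}\int|\nabla w|^2K$, and therefore
\[
Q(w)\ge\Big(1-\frac{2}{n(p-1)}\Big)\int|\nabla w|^2K\,dy=:\delta\int|\nabla w|^2K\,dy .
\]
Here $\delta>0$ exactly because $p>1+2/n$. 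Combining with (iii) once more, $\int|\nabla w|^2K$ controls the full norm, so $Q(w)\ge c\|w\|_{H^1(K)}^2$ for some $c>0$.

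Next we get a lower bound on the Nehari manifold. Let $w\neq0$ with $I(w)=0$, so that $Q(w)=\int|w|^{p+1}K$. Since $p<\frac{n+2}{n-2}$, we have $p+1\in(2,\frac{2n}{n-2})$. Lemma \ref{kavian}(ii) then gives
\[
\int|w|^{p+1}K\le C\|w\|_{H^1(K)}^{p+1}\le C'\Big(\int|\nabla w|^2K\Big)^{\frac{p+1}{2}} .
\]
Hence $\delta\int|\nabla w|^2K\le C'(\int|\nabla w|^2K)^{(p+1)/2}$. As $w\ne0$, this forces $\int|\nabla w|^2K\ge r_0:=(\delta/C')^{2/(p-1)}>0$, with $r_0$ independent of $w$.

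Finally, we express the energy on the manifold through $Q$. When $I(w)=0$,
\[
E(w)=\frac12 Q(w)-\frac{1}{p+1}\int|w|^{p+1}K=\Big(\frac12-\frac1{p+1}\Big)Q(w)\ge\frac{p-1}{2(p+1)}\,\delta\, r_0 .
\]
Taking the infimum yields $d\ge\frac{(p-1)\delta r_0}{2(p+1)}>0$. To show the infimum is over a nonempty set, so that $d<\infty$, take any $w\neq0$ in $H^1(K)$. The map $\lambda\mapsto I(\lambda w)=\lambda^2Q(w)-\lambda^{p+1}\int|\lambda^{0}w|^{p+1}K$ is positive for small $\lambda$ and negative for large $\lambda$, so it has a zero $\lambda^*>0$; the same scaling will later give nonemptiness of $W$ and $Z$.

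The only delicate point is the first step, namely that the negative term $-\frac1{p-1}\int|w|^2K$ is absorbed. This is exactly where the sharp constant $n/2$ in Lemma \ref{kavian}(iii) and the hypothesis $p>p_F$ enter. The upper restriction $p<p_S$ (or $p\le p_S$) is needed only for the embedding in (ii).
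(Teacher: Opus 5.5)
Your proof is correct and follows essentially the same route as the paper. On $\{I=0\}$ you use Lemma \ref{kavian}(iii) with $p>1+\frac{2}{n}$ to make the quadratic part coercive, the embedding (ii) to get a superquadratic upper bound, and the identity $E(w)=\frac{p-1}{2(p+1)}\int_{\mathbb{R}^n}|w|^{p+1}K\,dy$. The differences are cosmetic: you bound $\int|\nabla w|^2K\,dy$ uniformly from below, whereas the paper bounds $A=\int|w|^{p+1}K\,dy$ from below, and your explicit check that the Nehari set is nonempty is a small addition the paper leaves implicit.
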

\begin{proof}
Take $w\in H^1(K) \backslash\{0\}$ satisfying $I(w)=0$, then
\begin{equation}\label{FS}
\begin{aligned}
\int_{\mathbb{R}^{n}} |\nabla w|^{2}K(y)\mathrm{d}y-\frac{1}{p-1}\int_{\mathbb{R}^{n}}|w|^{2}K(y)\mathrm{d}y
=\int_{\mathbb{R}^{n}}|w|^{p+1}K(y)\mathrm{d}y.
\end{aligned}
\end{equation}
We denote $$ {A}:=\int_{\mathbb{R}^{n}}|w|^{p+1}K(y)\mathrm{d}y.$$
Since $p>1+\frac{2}{n}$, by Lemma \ref{kavian} (iii), we have
\begin{equation}\label{nb}
    {A}=\int_{\mathbb{R}^{n}} |\nabla w|^{2}K(y)\mathrm{d}y-\frac{1}{p-1}\int_{\mathbb{R}^{n}}|w|^{2}K(y)\mathrm{d}y> C\int_{\mathbb{R}^{n}}|w|^{2}K(y)\mathrm{d}y.
\end{equation}
From $1+\frac{2}{n}<p<\frac{n+2}{n-2}$, by \eqref{FS}, \eqref{nb} and Lemma \ref{kavian} (ii), (iii), we have
\begin{equation}\nonumber
\begin{aligned}
{A}&\le C\bigg(\int_{\mathbb{R}^{n}}|\nabla w|^{2}K(y)\mathrm{d}y\bigg)^{\frac{p+1}{2}}= C\bigg({A}+\frac{1}{p-1}\int_{\mathbb{R}^{n}}|w|^{2}K(y)\mathrm{d}y\bigg)^{\frac{p+1}{2}}
\le C{A}^{\frac{p+1}{2}}. \end{aligned}
\end{equation}
Thus
$$
{A}\ge C^{\frac{-2}{p-1}}:=C_0>0.
$$
By \eqref{FS}, we get
$$
E(w)=\frac{(p-1){A}}{2(p+1)}\ge \frac{(p-1)C_0}{2(p+1)}>0.
$$
Then by the definition of $d$ in \eqref{depth}, we know that $d>0$.
\end{proof}

\begin{proposition}
The sets $W$ and $Z$ are nonempty and invariant along the flow of \eqref{1.1}.
\end{proposition}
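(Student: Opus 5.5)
Nonemptiness comes from a scaling argument in the amplitude, and invariance from continuity of the flow together with the monotonicity of $E$ in \eqref{yb}. Here ``invariant along the flow'' means along the flow of \eqref{0.1}, i.e.\ $w(s)$ stays in $W$ (resp.\ $Z$) for all $s\in[0,T_s)$. This transfers back to \eqref{1.1} through the transform \eqref{fs4}.

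\emph{Nonemptiness.} Fix $\varphi\in H^1(K)\setminus\{0\}$; for instance $\varphi=K^{-1/2}\psi$ with $\psi\in C_c^\infty$, which lies in $H^1(K)$ by Lemma \ref{kavian} (i). Set
\[
a=\int|\nabla\varphi|^2K-\tfrac{1}{p-1}\int|\varphi|^2K,\qquad b=\int|\varphi|^{p+1}K .
\]
Since $p>1+\frac2n$, Lemma \ref{kavian} (iii) gives $a\ge(1-\frac{2}{n(p-1)})\int|\nabla\varphi|^2K>0$. For $\lambda>0$ we have $I(\lambda\varphi)=\lambda^2a-\lambda^{p+1}b$ and $E(\lambda\varphi)=\frac{\lambda^2}{2}a-\frac{\lambda^{p+1}}{p+1}b$.
\begin{itemize}
\item For $\lambda$ small, $I(\lambda\varphi)>0$ and $E(\lambda\varphi)\to0<d$, using the previous proposition. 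So $\lambda\varphi\in W$.
\item For $Z$, I need $I<0$ and $E<d$ at the same time. Along the ray, $E(\lambda\varphi)$ is maximized exactly at the Nehari point $\lambda^*=(a/b)^{1/(p-1)}$ and then decreases to $-\infty$. Hence for $\lambda$ large, $I(\lambda\varphi)<0$ and $E(\lambda\varphi)<0<d$. So $\lambda\varphi\in Z$.
\end{itemize}

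\emph{Invariance of $W$.} Let $w_0\in W$ and let $w$ solve \eqref{0.1}. By \eqref{yb}, $E(w(s))\le E(w_0)<d$ for all $s$. Suppose $I(w(s))$ does not stay positive. The map $s\mapsto w(s)$ is continuous into $H^1(K)$, so $I(w(s))$ is continuous in $s$. Hence there is a first time $s_0$ with $I(w(s_0))=0$.
\begin{itemize}
\item If $w(s_0)\neq0$, then $E(w(s_0))\ge d$ by \eqref{depth}, a contradiction.
\item If $w(s_0)=0$, then $w\equiv0$ afterwards by uniqueness, and $0\in W$.
\end{itemize}
One more point must be ruled out: that $w(s)$ approaches $0$ with $I(w(s))>0$ and then leaves. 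Near $0$, the estimate $\int|w|^{p+1}K\le C\|\nabla w\|^{p+1}_{L^2(K)}$ from Lemma \ref{kavian} (ii)--(iii) shows $I(w)>0$ for small nonzero $w$. So the sign of $I$ cannot change through a neighborhood of $0$ except at $0$ itself, and $0$ is stationary.

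\emph{Invariance of $Z$.} This is symmetric. $E$ stays below $d$. If $I$ first vanishes at $s_0$, then $w(s_0)\ne0$, because the small-ball estimate gives $I>0$ on a punctured neighborhood of $0$, so $I<0$ keeps $w$ away from a ball around $0$. Then \eqref{depth} gives $E(w(s_0))\ge d$, a contradiction.

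The main technical obstacle is justifying that $s\mapsto w(s)$ is continuous in $H^1(K)$ and that \eqref{yb} holds for $H^1(K)$ data. I would take this from the well-posedness theory of \eqref{0.1} in $H^1(K)$ following Kavian \cite{Kavian}, using Lemma \ref{kavian} (ii) to control the nonlinearity. The quantitative lower bound, namely that $I(w)=0$ with $w\ne0$ forces $\int|w|^{p+1}K\ge C_0$, is already contained in the proof of the previous proposition.
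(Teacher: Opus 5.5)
Your proposal is correct and follows the paper's proof. Nonemptiness comes from scaling $\lambda\varphi$ along a ray, and invariance from energy monotonicity \eqref{yb}, continuity of $s\mapsto I(w(s))$, and the definition \eqref{depth} of $d$ applied at a first time where $I$ vanishes. Your version is more careful than the paper's, which assumes without comment that $w(\bar s)\neq 0$ when it invokes \eqref{depth}: your uniqueness argument for $W$, and the small-ball estimate that keeps $w$ away from $0$ for $Z$, close that gap, though your ``one more point'' paragraph is redundant once the first-time argument is in place.
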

\begin{proof}
We first prove that $W$ and $Z$ are nonempty. For any $w\in H^1(K)$, we know that $\underset{b\to 0^+}\lim E(bw)=0$ and $\underset{b\to +\infty}\lim E(bw)=-\infty$,
Therefore,  for $0<b\ll1$ or $b\gg1$, we have $E(bw)<d$.
On the other hand, we see that  $I(bw)>0$ for $0<b\ll1$, and $I(bw)<0$ for $b\gg1$. We thus obtain that $bw\in W$ for $0<b\ll1$ and $bw\in {Z}$ for $b\gg1$, i.e., the sets $W$ and $Z$ are nonempty.

We next show that $W$ is an invariant set of problem \eqref{1.1}. If $u_0 \in W$ and $u_0\neq0$, we have $E(u_0)<d$ and $I(u_0)>0$. Then $E(s)$ is nonincreasing in $s$ implies
\begin{equation}\label{kh}
E(w(s))\le E(u_0)<d, \ \ \ \ \ \ s\ge 0.
\end{equation}
We claim that $I(w(s))>0$ for any $s\ge0$. Otherwise, by the continuity of $I(w)$, there exists $\overline{s}>0$ such that $I(w(\overline{s}))=0$, which implies $E(w(\overline{s}))\ge d$. It is a contradiction with \eqref{kh}. The claim follows. Combining \eqref{kh}, we know that $W$ is an  invariant set  of equation \eqref{1.1}.
Similarity, we can show that $Z$ is an  invariant set of \eqref{1.1}.

This completes the proof.
\end{proof}

\begin{lemma}\label{dd}
For any $\varepsilon>0$, there holds $$d\le d_{\varepsilon}+\frac{\varepsilon}{2},$$ where
$d_{\varepsilon}$ is defined by
\begin{equation}\label{dd1}
d_{\varepsilon}:=\inf\{E(w): w\in H^1(K),\ I(w)=-\varepsilon\}.
\end{equation}
\end{lemma}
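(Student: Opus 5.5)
Fix $\varepsilon>0$ and take any $w\in H^1(K)$ with $I(w)=-\varepsilon$; by the definition \eqref{dd1} of $d_\varepsilon$ it suffices to show $d\le E(w)+\frac{\varepsilon}{2}$. The plan is to scale $w$ back onto the Nehari manifold. We choose $\lambda\in(0,1)$ with $I(\lambda w)=0$, then compare $E(\lambda w)$ with $E(w)$.

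\emph{Existence of $\lambda$.} Write
\[
a=\int_{\mathbb{R}^n}|\nabla w|^2K\,dy-\frac{1}{p-1}\int_{\mathbb{R}^n}|w|^2K\,dy,\qquad B=\int_{\mathbb{R}^n}|w|^{p+1}K\,dy .
\]
Then $I(\lambda w)=\lambda^2 a-\lambda^{p+1}B$. Since $p>1+\frac{2}{n}$ gives $\frac{1}{p-1}<\frac n2$, Lemma \ref{kavian}(iii) yields $a\ge c\int|w|^2K>0$, because $w\neq0$ (as $I(w)\neq 0$). We also have $B=a+\varepsilon>a$. Hence $\lambda^*=(a/B)^{1/(p-1)}\in(0,1)$ satisfies $I(\lambda^* w)=0$ with $\lambda^* w\neq0$, and so $d\le E(\lambda^* w)$.

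\emph{Comparison of energies.} For any $v$, $E(v)=\frac{p-1}{2(p+1)}\int|v|^{p+1}K+\frac12 I(v)$. Applying this to $v=\lambda^* w$ and $v=w$ gives
\[
E(\lambda^* w)=\frac{p-1}{2(p+1)}(\lambda^*)^{p+1}B\le \frac{p-1}{2(p+1)}B=E(w)-\tfrac12 I(w)=E(w)+\frac{\varepsilon}{2}.
\]
So $d\le E(w)+\frac{\varepsilon}{2}$, and taking the infimum over all such $w$ gives $d\le d_\varepsilon+\frac{\varepsilon}{2}$.

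The only delicate point is positivity of $a$, which ensures that $\lambda^*$ exists. It uses exactly the hypothesis $p>p_F$ through Lemma \ref{kavian}(iii). If the admissible set in \eqref{dd1} is empty, then $d_\varepsilon=+\infty$ and the statement is trivial.
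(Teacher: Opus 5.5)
Your proof is correct and follows the paper's strategy: rescale an admissible $w$ by a factor $\lambda\in(0,1)$ onto the Nehari manifold and compare $E(\lambda w)$ with $E(w)$. The paper expands $E(a_jw_j)-E(w_j)$ along a minimizing sequence and uses $(p+1)a^2-2a^{p+1}-p+1\le 0$ on $(0,1)$; your identity $E(v)=\frac{p-1}{2(p+1)}\int|v|^{p+1}K\,dy+\frac12 I(v)$ gives the same bound more directly, and you also justify explicitly the positivity of $\int|\nabla w|^2K\,dy-\frac{1}{p-1}\int|w|^2K\,dy$, which the paper uses without comment when it asserts $I(bw_j)>0$ for small $b$.
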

\begin{proof}
From $I(w)=-\varepsilon$, by direct computation we have $d_{\varepsilon}>-\infty$.
For any fixed $\varepsilon>0$, we choose a sequence $\{w_j\}\in H^1(K)$ such that
$$I(w_j)=-\varepsilon\ \ \text{and}\ \ E(w_j)\to d_{\varepsilon}\ \ \text{as}\ \ j\to\infty.$$
We have $I(bw_j)>0$ for $b>0$ sufficiently small. Since $I(w_j)=-\varepsilon<0$, then by continuity, there exists a sequence $\{a_j\}$ satisfying $0<a_j<1$ such that $I(a_jw_j)=0$. From $I(w_j)=-\varepsilon$, we have
\begin{equation}\label{dmd}
\begin{aligned}
\varepsilon+\int_{\mathbb{R}^{3}} |\nabla w_j|^{2}K(y)\mathrm{d}y=\frac{1}{p-1}\int_{\mathbb{R}^{n}}|w_j|^{2}K(y)\mathrm{d}y+\int_{\mathbb{R}^{n}}|w_j|^{p+1}K(y)\mathrm{d}y.
\end{aligned}
\end{equation}
Since $I(a_ju_j)=0$, combining \eqref{dmd} we get
\begin{equation}\nonumber
\begin{aligned}
&d\le E(a_jw_j)\\
&=\frac{a_j^2}{2}\int_{\mathbb{R}^{n}} |\nabla w_j|^{2}K(y)\mathrm{d}y-\frac{a_j^2}{2(p-1)}\int_{\mathbb{R}^{n}}|w_j|^{2}K(y)\mathrm{d}y 
-\frac{a_j^{p+1}}{p+1}\int_{\mathbb{R}^{n}}|w_j|^{p+1}K(y)\mathrm{d}y\\
&=E(w_j)+\frac{a_j^2-1}{2}\int_{\mathbb{R}^{n}} |\nabla w_j|^{2}K(y)\mathrm{d}y+\frac{1-a_j^2}{2(p-1)}\int_{\mathbb{R}^{n}}|w_j|^{2}K(y)\mathrm{d}y\\
&\ \ \ \ \ +\frac{1-a_j^{p+1}}{p+1}\int_{\mathbb{R}^{n}}|w_j|^{p+1}K(y)\mathrm{d}y\\
&= E(w_j)+\frac{1-a_j^2}{2}\varepsilon+\frac{(p+1)a_j^2-2a_j^{p+1}-p+1}{2(p+1)}\int_{\mathbb{R}^{n}}|w_j|^{p+1}K(y)\mathrm{d}y\\
&\le  E(w_j)+\frac{\varepsilon}{2}.
\end{aligned}
\end{equation}
Here we use the fact that $(p+1)a_j^2-2a_j^{p+1}-p+1\le0$ for $a_j\in(0,1)$.
We conclude the result by letting $j\to\infty$ in the right-hand side of the above inequality.
\end{proof}

We next present the proof of our main result.
\begin{proof}[Proof of Theorem \ref{global}]
$(\mathrm{i})$ Since $u_0\in W$, we obtain $w(s)\in W$ for $s\in[0,T_s)$. By $I(w(s))>0$, we have
\begin{equation}\label{ip}
\begin{aligned}
\int_{\mathbb{R}^{n}} |\nabla w|^{2}K(y)\mathrm{d}y>\frac{1}{p-1}\int_{\mathbb{R}^{n}}|w|^{2}K(y)\mathrm{d}y+\int_{\mathbb{R}^{n}}|w|^{p+1}K(y)\mathrm{d}y.
\end{aligned}
\end{equation}
Then we get
\begin{equation}\label{DC}
\begin{aligned}
d>E(w)&=\frac{1}{2}\int_{\mathbb{R}^{n}} |\nabla w|^{2}K(y)\mathrm{d}y-\frac{1}{2(p-1)}\int_{\mathbb{R}^{n}}|w|^{2}K(y)\mathrm{d}y \\
&\ \ \ \ -\frac{1}{p+1}\int_{\mathbb{R}^{n}}|w|^{p+1}K(y)\mathrm{d}y\\
&> \frac{p-1}{2(p+1)}\int_{\mathbb{R}^{n}} |\nabla w|^{2}K(y)\mathrm{d}y -\frac{1}{2(p+1)}\int_{\mathbb{R}^{n}}|w|^{2}K(y)\mathrm{d}y.
\end{aligned}
\end{equation}
From Lemma \ref{kavian} (iii) and $p>1+\frac{2}{n}$, there exists a sufficiently small constant $c>0$ such that
\begin{equation}\label{DC1}
\frac{p-1}{2(p+1)}\int_{\mathbb{R}^{n}} |\nabla w|^{2}K(y)\mathrm{d}y -\frac{1}{2(p+1)}\int_{\mathbb{R}^{n}}|w|^{2}K(y)\mathrm{d}y\ge c\int_{\mathbb{R}^{n}} |\nabla w|^{2}K(y)\mathrm{d}y.
\end{equation}

Combining \eqref{DC} and \eqref{DC1}, there holds that $||w(s)||_{H^1(K)}$ is uniformly bounded for $s\in [0,T_s)$, then by the well-posedness of \eqref{0.1} in $H^1(K)$ (see \cite{Kavian} or Example 51.24 of \cite{Quittner-Souplet-2019}), we know that the maximal existence time of problem \eqref{0.1} can be extended to $\infty$. 
Then we have $T=\infty$ by \eqref{Hong}.


$(\mathrm{ii})$ Since $u_0\in Z$, we have $E(u_0)<d$ and $I(u_0)<0$. Fix $\varepsilon_1>0$ such that
$$\varepsilon_1<\min\big(-I(u_0),d-E(u_0)\big).$$
By Lemma \ref{dd}, we have
\begin{equation}\label{lwF}
E(w(s))\le E(u_0)<d-\varepsilon_1<d_{\varepsilon_1},\ \ \forall s\in [0, T_s),
\end{equation}
where $d_{\varepsilon_1}$ is defined in \eqref{dd1}. By \eqref{lwF} and $I(u_0)<-\varepsilon_1$, it follows by continuity that, for any $s\in[0,T_s)$, there holds $I(w(s))<-\varepsilon_1$.

 The following process is based on the concave argument by Levine \cite{Levine-1973}.
Let $M(s)=\frac{1}{2}\int_0^s\int_{\mathbb{R}^{n}}  |w(y,\tau)|^2K(y)dyd\tau.$ From Lemma \ref{kavian} $(\mathrm{iii})$,
we have
\begin{equation}\label{a01}
\begin{aligned}
M''(s)&=\int_{\mathbb{R}^{3}} ww_sK(y)dy=-I(w(s))\\
&=-\int_{\mathbb{R}^{n}} |\nabla w|^{2}K(y)\mathrm{d}y+\frac{1}{p-1}\int_{\mathbb{R}^{n}}|w|^{2}K(y)\mathrm{d}y+\int_{\mathbb{R}^{n}}|w|^{p+1}K(y)\mathrm{d}y\\
&=-(p+1) E(w(s))+\frac{p-1}{2}\int_{\mathbb{R}^{n}}|\nabla w|^2K(y)dy-\frac{1}{2}\int_{\mathbb{R}^{n}}|w|^2K(y)dy\\
&\ge -(p+1) E(w(s))+\frac{n(p-1)-2}{4}\int_{\mathbb{R}^{n}}|w|^2K(y)dy.
\end{aligned}
\end{equation}
Assume for contradiction that $T_s=\infty$. By $M''(s)=-I(w(s))>\varepsilon_1,$ we have
$\lim_{s\to\infty}M'(s)=\infty$. Then there exists $s'>0$ such that
\begin{equation}\label{gk}
\frac{n(p-1)-2}{4}\int_{\mathbb{R}^{n}}|w(s)|^2K(y)dy>(p+1)E(u_0),\ \ \forall s\ge s'.
\end{equation}
Then by \eqref{yb}, \eqref{a01} and \eqref{gk},  we get
$$
M''(s)\ge(p+1)\int_0^s\int_{\mathbb{R}^{n}}|w_s(y,\tau)|^2K(y)\mathrm{d}y\mathrm{d}\tau\ \ \forall s\ge s'.
$$
Hence for any $s\ge s'$, by the
Cauchy-Schwarz inequality, there holds
\begin{equation}\label{to}
\begin{aligned}
M(s)M''(s)&\ge \frac{p+1}{2}\left(\int_0^s\int_{\mathbb{R}^{n}} |w(y,\tau)|^2K(y)dyd\tau\right)\left(\int_0^s\int_{\mathbb{R}^{n}}|w_s(y,\tau)|^2K(y)\mathrm{d}y\mathrm{d}\tau\right)\\
&\ge \frac{p+1}{2}\left( \int_0^s\int_{\mathbb{R}^{n}}w_s(y,\tau)w(y,\tau)\mathrm{d}y\mathrm{d}\tau\right)^2\\
&= \frac{p+1}{2}\big(M'(s)-M'(0)\big)^2.
\end{aligned}
\end{equation}
By $\lim_{s\to\infty}M'(s)=\infty$, there exist $\alpha>0$ and $s''\ge s'$ such that
$$ M(s)M''(s)\ge  \frac{p+1}{2}\big(M'(s)-M'(0)\big)^2\ge (1+\alpha)(M'(s))^2,\ \ \forall s\ge s''.
$$

Denote $Y(t)=M^{-{\alpha}}(t)$, the above inequality implies that $Y(t)$ is concave in $[s'',\infty)$. But this contradicts that $Y(t)$ is positive and decreasing in $[s'',\infty)$.  We thus have $T_s<\infty$, and hence $T<\infty$ by \eqref{Hong}.

The proof is completed.
\end{proof}
\section*{Disclosure statement}
No potential conflict of interest was reported by the authors.

\section*{Data Availability Statement}
Data sharing is not applicable to this paper as no datasets were generated or analysed during the current study.

\section*{Acknowledgements}
K. Zhang would like to thank Prof. Charles Collot and Prof. Li Ma for their helpful discussions. This manuscript was
written while K. Zhang was visiting the Laboratoire Analyse, G$\mathrm{\acute{e}}$om$\mathrm{\acute{e}}$trie et Mod$\mathrm{\acute{e}}$lisation
(UMR CNRS 8088) of CY Cergy Paris Universit$\mathrm{\acute{e}}$, which is acknowledged for the hospitality. K. Zhang is supported by the China Scholarship Council [No. 202206460045]. Z. Li is supported by the Doctoral Introduction Fund Project of Shandong Technology and Business University (No. BS202462).

\end{document}